\documentclass[11pt,a4paper]{article}
\usepackage{amsthm,amsmath, amssymb, amsfonts, graphicx}
\usepackage[pdftex]{hyperref}
\usepackage[all]{xy}
\usepackage{color}
\newtheorem{theorem}{Theorem}
\newtheorem{lemma}{Lemma}
\newtheorem{corollary}{Corollary}

\newtheorem{prop}{Proposition}

\theoremstyle{remark}
\newtheorem{rem}{Remark}
\newtheorem*{example}{Example}
\newtheorem*{ack}{Acknowledgements}

\newcommand*{\R}{\mathbb{R}}

\newcommand*{\N}{\mathbb{N}}
\newcommand*{\C}{\mathbb{C}}
\newcommand*{\field}{\mathbb{F}}
\renewcommand*{\k}{\mathbb{F}}

\newcommand*{\A}{\mathcal{A}}

\newcommand*{\I}{\mathcal{I}}

\newcommand*{\rk}{\mathrm{rk}}

\newcommand*{\pr}{\mathrm{pr}}

\newcommand*{\stab}{\mathrm{Stab}}

\newcommand*{\F}{\mathcal{F}}
\newcommand*{\G}{\mathcal{G}}

\title{Tensor invariants for certain subgroups of the orthogonal group}
\author{Jan Draisma\footnote{Eindhoven University of Technology and
CWI, Amsterdam. Email: \texttt{j.draisma@tue.nl}. }\and Guus Regts\footnote{CWI, Science Park 123 
1098 XG Amsterdam, the Netherlands.  Email: \texttt{regts@cwi.nl}.}}

\begin{document}

\maketitle
\begin{abstract}
Let $V$ be an $n$-dimensional vector space and let $O_n$ be the orthogonal group.
Motivated by a question of B. Szegedy (B. Szegedy, Edge coloring models and reflection positivity, {\sl Journal of the American Mathematical Society}
 Volume 20, Number 4, 2007), about the rank of edge connection matrices of partition functions of vertex models,
we give a combinatorial parameterization of tensors in $V^{\otimes k}$ invariant under certain subgroups of the orthogonal group.
This allows us to give an answer to this question for vertex models with values in an algebraically closed field of characteristic zero.
\\[.5cm]
\textbf{Keywords:} Edge connection matrix, graph invariant, partition function, orthogonal group, tensor invariants, vertex model.
%\\
%\textbf{MSC:} 90C10 (52B40).
\end{abstract}

\section{Introduction} \label{sec:Introduction}
Let $\field$ be a field and let $V$ be a
$n$-dimensional vector space over $\field$ equipped with a
nondegenerate symmetric bilinear form
$\langle\cdot,\cdot\rangle$ equivalent to the standard form
on $\field^n$.
We use superscript-$*$ to indicate the duals of
$\field$-vector spaces.
Let $R=S V^*$ be the symmetric algebra generated by $V^*$, which we
identify with the ring of polynomial functions on $V$. 
The orthogonal group $O_n$ is the group of invertible linear transformations of $V$ preserving the bilinear form.
It has a natural action on $R$ and hence on $R^*$.
Let $h\in R^*$ and define 
\begin{equation}
 \stab(h):=\{g\in O_n\mid gh=h\}.	\label{eq:stab}
\end{equation}
The orthogonal group acts on $V^{\otimes k}$ for $k\in \N$.
In this paper we give a combinatorial parameterization of the
space of tensors that are invariant under $\stab(h)$ for
certain $h\in R^*$ when $\k$ is an algebraically closed
field of characteristic zero.
This interpretation can be seen as a generalization of the
Brauer algebra, to which it reduces for $h=0$.

This work is motivated by a question of B. Szegedy concerning the rank of edge connection 
matrices of partition functions of vertex models.
To state his question we introduce some terminology.
Let $\G$ be the (countable) set of isomorphism classes of finite,
undirected graphs, allowing multiple edges, loops, and circles. Here
a circle is a graph with one edge and no vertices. An $\field$-valued
\emph{graph invariant} is a map $f:\G \to \field$. We may think of $f$
as mapping graphs to $\field$ in such a way that isomorphic graphs
are mapped to the same element. This allows us to use the term graph
even when we mean 
isomorphism class of graphs. In particular, we will
somewhat inaccurately speak of the vertex set $VG$ and the edge set $EG$
of an element $G\in \G$.

Throughout this paper we set $\N=\{0,1,2\ldots\}$ and for $n\in \N$,
$[n]$ denotes the set $\{1,\ldots,n\}$.

Let $e_1,\ldots,e_n\in V$ be an orthonormal basis for $V$ and let $x_1,\ldots,x_n\in V^*$ be the associated dual basis.
Then $R$ is just $\field[x_1,\ldots,x_n]$, the polynomial ring in $n$ variables.
Following de la Harpe and Jones \cite{HJ} we call any $h\in R^*$ an \emph{($\field$-valued) $n$-color vertex model}.
The vertex model can be considered as a statistical mechanics model, where
vertices serve as particles, edges as interactions between particles, and colors as states or energy
levels.
The \emph{partition function} of $h$ is the graph invariant $f_h:\G\to \field$ defined by
\begin{equation}
f_h(G)= \sum_{\phi: EG\to [n]} \prod_{v\in VG}
h\left(\prod_{e\in \delta(v)}x_{\phi(e)}\right),
\end{equation}
for any $G\in \G$. Here $\delta(v)$ is the multiset of edges incident with $v$.
Several graph invariants are partition function of vertex models. For example, the number of perfect machings, but
 also the number of homomorphisms into a fixed graph \cite{Szegedy} (for complex valued vertex models).

\begin{rem}\label{rem:coordinatefree}
At first sight, this definition of $f_h$ seems to depend on the choice of
orthonormal basis $e_1,\ldots,e_n \in V$. However, the following more
conceptual interpretation  shows that it really only depends on the
bilinear form and on $h \in R^*=(S V^*)^*$, and that the definition can
in principle be extended to symmetric bilinear forms for which  (over
algebraically non-closed fields) no orthonormal basis exists. A graph
$G=(VG,EG)$ gives rise to a polynomial function $\psi:V^{VG} \to \field$
by sending a tuple $(v_i)_{i \in VG}$ to $n$ to the power the number
of circles in $G$ times the product of the expressions $\langle v_i,
v_j \rangle$ over all non-circle edges $\{i,j\} \in EG$.  Then $\psi$ is
an element of the tensor power $R^{\otimes VG}$ (since coordinate rings
of Cartesian products are tensor products of coordinate rings). Applying
the element $h^{\otimes VG}$ to $\psi$ gives a number, which is nothing
but $f_h(G)$. By construction the function $\psi$ is invariant under
the orthogonal group acting on $V^{VG}$, which implies that $f_{gh}=f_h$
for all $g \in O_n$. Many arguments in this paper have coordinate-free
analogues. For the sake of concreteness, however, we mostly work directly
with formulas such as the one above.
\end{rem}

We now introduce the concept of $k$-fragments, for $k \in \N$.
A \emph{$k$-fragment} is a graph which has $k$ distinct degree-one
vertices labeled $1$ up to $k$.  These labeled vertices are called the
\emph{open ends} of the graph.  The edge connected to an open end is
called a \emph{half edge}.  Let $\F_k$ be the set of all $k$-fragments,
so that $\F_0$ equals $\G$, the set of graphs without labels.
Define a gluing operation $*:\F_k\times \F_{k}\to \G$ as follows: in
the disjoint union of $F,H\in \F_k$, connect the half edges incident
with open ends with identical labels to form single edges (with the
labeled vertices erased); the resulting graph is denoted $F*H \in \F_0$;
see Figure~\ref{fig:gluing}.

\begin{figure}
\begin{center}
\includegraphics[width=.8\textwidth]{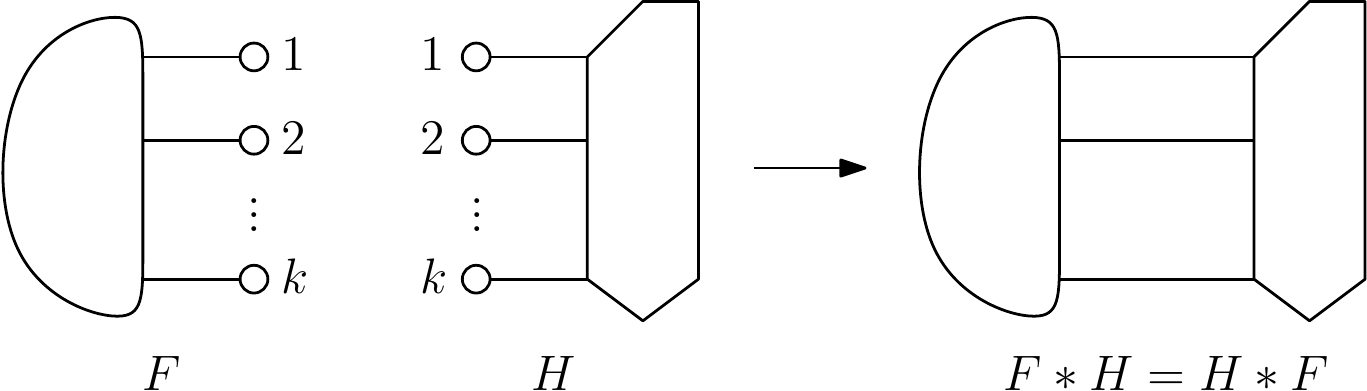}
\caption{Gluing two $k$-fragments into a graph.}
\label{fig:gluing}
\end{center}
\end{figure}

For any  graph invariant $f$ let $M_{f,k}$ be the $\F_k\times \F_k$-matrix defined by 
\begin{equation}
M_{f,k}(F,H)=f(F*H),
\end{equation}
for $F,H\in \F_k$.
This matrix is called the \emph{edge connection matrix} of $f$.
Edge connection matrices were used by Szegedy \cite{Szegedy} to characterize
wich graph invariants $f:\G\to \R$ are partition functions of real valued vertex models and by Schrijver \cite{Sch11}
to characterize wich graph invariants $f:\G\to \C$ are partition functions of complex valued vertex models.

In \cite{Szegedy} Szegedy asked for a characterization of the rank of $M_{f,k}$ 
for $k=1,2\ldots$, if $f=f_h$ for some real valued vertex model $h$. 
This question was answered by the second author in \cite{guus}.
In this paper we give an answer to this question for $\field$-valued vertex models where $\field$ is an 
algebraically closed field of characteristic zero.
Our characterization reads:

\begin{theorem}	\label{thm:main}
Let $\field$ be an algebraically closed field of characteristic zero and let $h\in R^*$.
Then there exists $h'\in R^*$ such that $f_h(G)=f_{h'}(G)$ for all $G\in \G$ and such that
\begin{equation}
\rk(M_{f_h,k})=\dim (V^{\otimes k})^{\stab(h')}.	\label{eq:rank}
\end{equation}
\end{theorem}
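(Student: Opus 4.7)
The plan is to associate, to each $k$-fragment $F$, a tensor $n_F\in V^{\otimes k}$ in an $O_n$-equivariant way, show that the Gram matrix of the $n_F$'s coincides with $M_{f_h,k}$, and then, after replacing $h$ by a closed-orbit representative $h'\in\overline{O_n\cdot h}$, identify the span of the $n_F$'s with $(V^{\otimes k})^{\stab(h')}$.

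For $F\in\F_k$ let $d_1,\ldots,d_k$ denote the half-edges at the open ends, and define
\[
 n_F \;=\; \sum_{\phi\colon EF\to[n]}\;\prod_{v}\,h\!\left(\prod_{e\in\delta(v)}x_{\phi(e)}\right)\;e_{\phi(d_1)}\otimes\cdots\otimes e_{\phi(d_k)}\;\in\;V^{\otimes k},
\]
where the product ranges over non-open-end vertices. Coordinate-freely, $n_F=h^{\otimes VF}(\psi_F)$ for an $O_n$-invariant tensor $\psi_F\in R^{\otimes VF}\otimes V^{\otimes k}$ built from pairwise contractions along the edges of $F$, in exact analogy with Remark~\ref{rem:coordinatefree}; in particular $n_F\in(V^{\otimes k})^{\stab(h)}$. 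A direct bookkeeping argument identifying the gluing $F*H$ with contraction on the $V^{\otimes k}$-factors yields
\[
 \langle n_F,n_H\rangle \;=\; f_h(F*H)\qquad\text{for all }F,H\in\F_k.
\]
Setting $W_k:=\mathrm{span}\{n_F\mid F\in\F_k\}\subseteq(V^{\otimes k})^{\stab(h)}$, the displayed identity gives $\rk(M_{f_h,k})\leq\dim W_k$, with equality whenever the form restricts to a non-degenerate pairing on $W_k$.

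Choose $h'$ by geometric invariant theory. The coefficients of $h$ relevant to any fixed finite set of fragments lie in a finite-dimensional $O_n$-submodule $U\subseteq R^*$ (for instance $U=\bigoplus_{d\leq D}S^dV$ for $D$ sufficiently large); since $O_n$ is reductive and $\field$ is algebraically closed of characteristic zero, GIT produces $h'\in\overline{O_n\cdot h}\subseteq U$ whose $O_n$-orbit is closed. Then $\stab(h')$ is reductive (Matsushima). Each map $h\mapsto f_h(G)$ is an $O_n$-invariant polynomial on $U$, hence constant on $\overline{O_n\cdot h}$; so $f_{h'}=f_h$ on $\G$, and $M_{f_h,k}=M_{f_{h'},k}$.

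The crux is to prove $W_k(h')=(V^{\otimes k})^{\stab(h')}$. Frobenius reciprocity gives $(V^{\otimes k})^{\stab(h')}\cong\mathrm{Hom}_{O_n}(\field[O_n\cdot h'],V^{\otimes k})$, and closedness of the orbit makes $\field[O_n\cdot h']$ an $O_n$-equivariant quotient of $S(U^*)$. The First Fundamental Theorem for $O_n$ describes $(S(U^*)\otimes V^{\otimes k})^{O_n}$ as the span of diagrams pair-contracting $V^*$-factors (coming from $U^*$) among themselves and with the $V^{\otimes k}$-factor; each such diagram evaluated at $h'$ is precisely the tensor $n_F$ for the $k$-fragment $F$ encoding its pairing pattern. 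This surjectivity onto $(V^{\otimes k})^{\stab(h')}$ is the main obstacle and uses characteristic zero (for reductivity and for the FFT) together with algebraic closure (for closed orbits) in an essential way. Given it, $W_k(h')$ is the trivial $\stab(h')$-isotypic in $V^{\otimes k}$; the $\stab(h')$-invariant form on $V^{\otimes k}$ is non-degenerate and pairs the trivial isotypic with itself, so it is non-degenerate on $W_k(h')$, and the Gram identity of the previous step (applied to $h'$) yields $\rk(M_{f_{h'},k})=\dim(V^{\otimes k})^{\stab(h')}$. Combined with $M_{f_h,k}=M_{f_{h'},k}$, this proves \eqref{eq:rank}.
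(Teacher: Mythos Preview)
Your overall strategy coincides with the paper's: define the tensor map $F\mapsto n_F$ (the paper's $p_h$), use the Gram identity $\langle n_F,n_H\rangle=f_h(F*H)$, replace $h$ by a closed-orbit representative $h'$, and then combine closedness with the First Fundamental Theorem to get $W_k(h')=(V^{\otimes k})^{\stab(h')}$ and nondegeneracy from reductivity of $\stab(h')$. Steps 1, 2, 4, 5 are essentially the paper's arguments, only with your Frobenius-reciprocity packaging in step 4 replacing the paper's more hands-on proof of Theorem~\ref{thm:closed orbit}.

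There is, however, a genuine gap in step 3. You write $\overline{O_n\cdot h}\subseteq U$ with $U=\bigoplus_{d\leq D}S^dV$, but in general $h$ does \emph{not} lie in any such finite-dimensional $U$: an element of $R^*\cong\prod_d S^dV$ may have infinitely many nonzero graded components. What you actually obtain from GIT is a closed-orbit point $h'_D$ in the closure of the orbit of the \emph{truncation} $h_D:=\pr_U(h)$. Extending $h'_D$ by zero to an element of $R^*$ does not give $f_{h'_D}=f_h$ on all of $\G$, since graphs can have vertices of arbitrarily large degree; and your later claim ``$f_{h'}=f_h$ on $\G$'' needs exactly this. Nor can a single $D$ be chosen in advance to work for all $k$, because the fragments contributing to $W_k$ may contain vertices of unbounded degree.

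The paper repairs this by an inverse-limit argument: for each $d$ one takes the unique closed orbit $C_d$ in the fibre $Y_d=\{y:\pi(G)(y)=f_h(G)\ \text{for all }G\in\G_d\}$, observes that $\dim C_d$ is nondecreasing and bounded by $\dim O_n$, hence eventually constant, and that from that point on the projections $\pr_d\colon C_{d'}\to C_d$ are surjective. A compatible sequence $(h'_d)_d$ then assembles into a single $h'\in R^*$ with $f_{h'}=f_h$ everywhere and with $\stab(h')=\stab(h'_e)$ for some finite $e$ (Noetherianity of $\k[O_n]$), so that the closed-orbit/FFT argument can be run in the finite-dimensional space $\bigoplus_{d\leq e}S^dV$. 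Your sketch needs this extra step; once it is inserted, the rest of your argument goes through and matches the paper's.
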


%In fact, we will construct of a surjective linear map from the space
%$\field \F_k$ of formal $\field$-linear combinations of $k$-fragments
%onto the space of tensors in $V^{\otimes k}$ that are stable under
%$\stab(h')$ \textcolor{red}{(cf. Theorem \ref{thm:main2})}. This is the combinatorial parameterization of
%$\stab(h')$-invariant tensors referred to earlier.

The organization of this paper is as follows. 
In Section 2 we compare the real case with the algebraically closed case. 
Section 3 contains some framework and preliminaries. In particular, we state here the the combinatorial parameterization of
$\stab(h')$-invariant tensors (cf. Theorem \ref{thm:main2}) referred to earlier, from which we deduce Theorem \ref{thm:main}. 
In Section 4 we prove a sufficient condition for a subalgebra of the tensor algebra to
be the algebra of invariants of some reductive subgroup of the orthogonal group,
which may be of independent interest. We then use this condition to prove Theorem \ref{thm:main2} in Section 5.
Finally, in Section 6 we prove for a special class of vertex models when we can 
take $h'=h$ in the right-handside of \eqref{eq:rank}.

\section{Real versus complex}
For real valued vertex models the following result holds:
\begin{theorem}[\cite{guus}]	\label{thm:real}
Let $\field =\R$ and let $h\in R^*$. Then
\begin{equation}
\rk(M_{f_h,k})=\dim(V^{\otimes k})^{\stab(h)}.
\end{equation}
\end{theorem}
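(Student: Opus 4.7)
The plan is to embed $\F_k$ into $V^{\otimes k}$ and realize $M_{f_h,k}$ as a Gram matrix. For each $k$-fragment $F$, I would define a tensor $\psi_F \in V^{\otimes k}$ by regarding the $k$ half-edges incident with the open ends as free tensor slots, contracting every other edge using the bilinear form on $V$, and applying $h$ at each non-open-end vertex. Concretely, if $a_1,\ldots,a_k$ are the half-edges at the open ends and $V'F$ denotes the non-open-end vertices, then
\begin{equation*}
\psi_F = \sum_{\phi:EF\to[n]} \left(\prod_{v\in V'F} h\left(\prod_{e\in\delta(v)}x_{\phi(e)}\right)\right) e_{\phi(a_1)}\otimes\cdots\otimes e_{\phi(a_k)}.
\end{equation*}
Since $h$ is $\stab(h)$-invariant by definition and the bilinear form is $O_n$-invariant, each $\psi_F$ lies in $(V^{\otimes k})^{\stab(h)}$.

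A direct unwinding of the definition of the gluing operation $*$ shows that $f_h(F*H)=\langle\psi_F,\psi_H\rangle$, where $\langle\cdot,\cdot\rangle$ on $V^{\otimes k}$ is the form induced by the one on $V$. Hence $M_{f_h,k}$ is the Gram matrix of the family $\{\psi_F\}_{F\in\F_k}$. Over $\R$ the bilinear form is positive definite, and so is the induced form on $V^{\otimes k}$; the rank of such a Gram matrix equals the dimension of the span of its generating family, giving
\begin{equation*}
\rk(M_{f_h,k}) \;=\; \dim\mathrm{span}\{\psi_F:F\in\F_k\} \;\leq\; \dim(V^{\otimes k})^{\stab(h)}.
\end{equation*}

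The main obstacle is the reverse inequality: every $\stab(h)$-invariant tensor must be a linear combination of the $\psi_F$'s. For this I would use that $\stab(h)$ is a closed subgroup of the compact real Lie group $O_n(\R)$, hence itself compact, so integration against Haar measure yields a Reynolds projection $\pi:V^{\otimes k}\to(V^{\otimes k})^{\stab(h)}$. The remaining content is a First Fundamental Theorem for $\stab(h)$: every invariant tensor arises by iterated contractions against the bilinear form together with applications of the distinguished invariant $h$, and any such recipe is encoded by a $k$-fragment whose internal vertices carry $h$ and whose internal edges carry the bilinear form. Establishing this combinatorial generating statement is the heart of the proof, and it is precisely this step that exploits real compactness and averaging; it is what fails over general fields, motivating the reductive-group substitute developed in the rest of the present paper for algebraically closed fields of characteristic zero.
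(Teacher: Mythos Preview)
Your setup is exactly the one the paper uses: your $\psi_F$ is the paper's $p_h(F)$, your Gram identity $f_h(F*H)=\langle\psi_F,\psi_H\rangle$ is equation~\eqref{eq:prod}, and the invariance $\psi_F\in(V^{\otimes k})^{\stab(h)}$ is Remark~\ref{rem:invariant}. The positive-definiteness argument for the inequality $\rk(M_{f_h,k})=\dim p_h(\A)\cap V^{\otimes k}\le \dim(V^{\otimes k})^{\stab(h)}$ is correct and is precisely why the real case is simpler (this is the nondegeneracy of $h$ for free).

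The gap is in the reverse inequality. You correctly name it as ``a First Fundamental Theorem for $\stab(h)$'' and say it is the heart of the proof, but you do not actually argue it; invoking Haar measure and a Reynolds projection onto $(V^{\otimes k})^{\stab(h)}$ does not by itself explain why the image lands in the span of the $\psi_F$. The paper does not prove Theorem~\ref{thm:real} either---it is quoted from \cite{guus}---but it tells you what the missing ingredient is: Schrijver's characterization \cite{Sch} of which graded, contraction-closed subalgebras of the real tensor algebra arise as $T(V)^H$ for some $H\subseteq O_n$. Concretely, one shows that $p_h(\A)$ is a graded, contraction-closed subalgebra containing $\sum_i e_i\otimes e_i$ (this is Lemma~\ref{lem:map p} together with the fragment product), that its pointwise stabilizer equals $\stab(h)$ (Proposition~\ref{prop:stab}), and then applies Schrijver's theorem. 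The mechanism of that theorem is the one you gesture at and which the present paper reproduces over algebraically closed fields in Theorem~\ref{thm:closed orbit}: regular functions on $O_n/\stab(h)$ are contractions of elements of $p_h(\A)$ against arbitrary tensors, and averaging \eqref{eq:a} over $O_n$ (not over $\stab(h)$) replaces those arbitrary tensors by $O_n$-invariants, which already lie in $p_h(\A)$ by the classical FFT. Over $\R$ no closed-orbit hypothesis is needed because every orbit of the compact group $O_n(\R)$ is closed; that is the precise way compactness enters, not merely through the existence of a Reynolds operator for $\stab(h)$.
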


The proof of this theorem uses the fact that the orthogonal group
of the standard symmetric bilinear form on $\R^n$ is compact;
the analogous statement for non-compact forms is not true. Over
algebraically closed fields of characteristic zero all nondegenerate
forms are equivalent. Consequently, we cannot simply take $h'=h$ in
Theorem \ref{thm:main}, as the following example shows.

\begin{example} \label{ex:nonclosed}
Let $i$ be a square root of $-1$ and set $n:=2, x_1:=x, x_2:=y$.
Consider the vertex model $h:\field[x,y]\to \field$ given by 
\begin{equation}
h(x^ay^b)=\left \{ \begin{array}{l} 1\text{ if } a=1\text{ and }b=0,\\i\text{ if }a=0 \text{ and }b=1\\0, \text{ else. }\end{array}\right.
\end{equation}
Note that for any graph $G$ with at least one vertex we have $f_h(G)=0$. 
Indeed, if $G$ contains an isolated vertex or a vertex of degree at least $2$, then $f_h(G)=0$. Else, $G$ is a perfect matching.  
Since for an edge $e$ we have $f_h(e)=h(x)^2+h(y)^2=0$, also in this
case $f_h(G)$ equals $0$.
So the rank of $M_{f_h,1}$ is equal to zero.
It is not difficult to see that that $\stab(h)=\{I\}$, where $I$ is the identity in $O_2$.
Hence $\rk(M_{f_h,1})\neq \dim V^{\stab(h)}=2$. 
More generally, the following holds: $\rk (M_{f_h,k})=\dim(V^{\otimes k})^{O_2}$.
The vertex model $h'\equiv 0\in \field[x,y]^*$ does the job.

%As we will see later, the reason that $\rk(M_{f_h,1})\neq \dim V^{\stab(h)}$ is that the orbit of $h$ is not closed.
\end{example}

\section{Framework and preliminaries}
In the remainder of this paper we assume that $\field$ is
algebraically closed and has characteristic zero.
Let $\field \F_k$ denote the linear space consisting of (finite) formal $\k$-linear combinations of fragments.
Extend the gluing operation bilinearly to $\k\F_k\times \k\F_k$.
Let
\begin{equation}
\A:=\bigoplus_{k=0}^{\infty}\field\F_k.
\end{equation}
Make $\A$ into a graded associative algebra by defining, for $F\in \F_k$
and $H\in \F_l$, the product $FH$ to be the disjoint union of $F$ and $H$,
where the open end of $H$ labeled $i$ is relabeled to $k+i$.

Fix a vertex model $h\in R^*$. 
Set $\I_k:=\{\gamma \in \k \F_k \mid f_h(\gamma*F)=0\text{ for all $k$-fragments }F\}$
and let $\I:=\bigoplus_{k=0}^\infty \I_k$. 
Observe that 
\begin{equation}
\rk(M_{f_{h},k})=\dim(\k\F_k/\I_k).		\label{eq:rank1}
\end{equation}

Let $T(V):=\bigoplus_{i=0}^{\infty}V^{\otimes k}$ be the tensor algebra of $V$ (with product the tensor product).
For $\phi:[k]\to[n]$ define $e_{\phi}:=e_{\phi(1)}\otimes \cdots\otimes e_{\phi(k)}$. The $e_{\phi}$ form a basis for $V^{\otimes k}$.
We will now exhibit a  natural homomorphism from $\A$ to $T(V)$.

For a $k$-fragment $F$ we denote its edges (including half edges) by $EF$ and its vertices (not including open ends) by $VF$.
Moreover, we will identify the half edges of $F$ with the set $[k]$.
Let $F\in \F_k$ and let $\phi:[k]\to [n]$. 
Define 
\begin{equation}
h_{\phi}(F):=\sum_{\substack{\psi:EF\to [n]\\\psi(i)=\phi(i)\text{ } i=1,\ldots, k}} \prod_{v\in VF}h 
\left(\prod_{e\in \delta(v)}x_{\psi(e)}\right).
\end{equation}
We can now define the map $p_h:\A\to T(V)$ by 
\begin{equation}
F \mapsto \sum_{\phi:[k] \to [n]} h_{\phi}(F) e_{\phi},
\end{equation}
for $F\in \F_k$, extended linearly to $\A$.
Observe that for $F\in \F_0$, $p_h(F)=f_h(F)$,
and note that for $F,H\in \F_k$,
\begin{equation}
f_h(F*H)=\sum_{\phi:[k]\to[n]}h_{\phi}(F)h_{\phi}(H).	\label{eq:prod}
\end{equation}

It is not difficult to see that $p_h$ is a homomorphism of algebras.
By \eqref{eq:prod} it follows that $\ker p_h\subseteq \I$.
This gives rise to the following definition: we call $h\in R^*$ \emph{nondegenerate} if 
$\ker p_h=\I$. Equivalently, $h\in R^*$ is nondegenerate if the
algebra $p_h(\A)$ is nondegenerate with respect to the bilinear form
on $T(V)$ induced by that on $V$.
So for nondegenerate $h$ we have $\A/\I\cong p_h(\A)$ and in particular, by \eqref{eq:rank1},
\begin{equation}
\rk(M_{f_{h},k})=\dim(p_h(\A)\cap V^{\otimes k}).	\label{eq:rank2}
\end{equation}

The following theorem gives a combinatorial parameterization of the tensors invariant under $\stab(h')$.
By \eqref{eq:rank2}, it implies Theorem \ref{thm:main}.

\begin{theorem}	\label{thm:main2}
Let $h\in R^*$.
Then there exists a nondegenerate $h'\in R^*$ with $f_h(G)=f_{h'}(G)$ for all $G\in \G$ such that 
\begin{equation}
p_{h'}(\A)=T(V)^{\stab(h')}.
\end{equation}
\end{theorem}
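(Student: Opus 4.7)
The plan has four steps: establish the easy inclusion $p_h(\A)\subseteq T(V)^{\stab(h)}$, construct $h'$ with $f_{h'}=f_h$ and reductive stabilizer, apply the Section~4 invariant-theoretic theorem to $p_{h'}(\A)$, and finally identify the reductive group produced by that theorem with $\stab(h')$.

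For the easy inclusion, a direct inspection of the definition of $p_h$ shows that for every $g\in O_n$ and every $F\in\F_k$ one has $g\cdot p_h(F)=p_{gh}(F)$, since the action of $g$ on $V$ permutes the orthonormal basis compatibly with the labelling of edges in the definition of $h_\phi(F)$. Hence $p_h(\A)\subseteq T(V)^{\stab(h)}$ for every $h\in R^*$. To construct $h'$, I would view $h$ as a graded element of $\prod_{k}S^kV\cong R^*$ and, for each bound on the degree, consider the orbit closure $\overline{O_n\cdot h}$ inside the corresponding finite-dimensional $O_n$-representation. Since $f_h$ is $O_n$-invariant (Remark~\ref{rem:coordinatefree}) and depends polynomially on $h$, it is constant on this orbit closure. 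By the general theory of reductive group actions on affine varieties, the closure contains a unique closed $O_n$-orbit, and any $h'$ in it has reductive stabilizer (Matsushima's criterion) and satisfies $f_{h'}=f_h$; a choice compatible across degree bounds gives the desired $h'\in R^*$.

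Next I would apply the main theorem of Section~4 to $B:=p_{h'}(\A)$. The algebra $B$ is graded and, because fragments can be glued, contains the tensor $\sum_{i=1}^{n}e_i\otimes e_i\in V^{\otimes 2}$ (the image of the 2-fragment consisting of a single edge between the two open ends), and it is stable under the contractions with this tensor that correspond to gluing. These are exactly the hypotheses of the Section~4 theorem, which produces a reductive subgroup $G\subseteq O_n$ with $B=T(V)^G$. For the identification with $\stab(h')$, let $\Sigma_k\in\F_k$ be the fragment with a single internal vertex of degree $k$ whose $k$ edges all go to the open ends; then $p_{h'}(\Sigma_k)\in V^{\otimes k}$ is the degree-$k$ component of $h'$ embedded via $S^kV\hookrightarrow V^{\otimes k}$. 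Since this element lies in $B=T(V)^G$, the group $G$ fixes every component of $h'$, so $G\subseteq\stab(h')$. Combining this with the easy inclusion yields
\[
T(V)^G=B\subseteq T(V)^{\stab(h')}\subseteq T(V)^G,
\]
so $T(V)^{\stab(h')}=T(V)^G$; because closed reductive subgroups of $O_n$ are determined by their invariant tensors in $T(V)$, this forces $\stab(h')=G$ and hence $p_{h'}(\A)=T(V)^{\stab(h')}$. Nondegeneracy of $h'$ is then automatic, since the $O_n$-invariant nondegenerate form on each $V^{\otimes k}$ pairs each nontrivial $\stab(h')$-isotypic component with its dual, and so restricts nondegenerately to the trivial component.

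The main obstacle I anticipate is the application of the Section~4 theorem: pinning down the right axiomatic closure conditions on a subalgebra of $T(V)$ under which it must be the algebra of invariants of a reductive subgroup of $O_n$, and verifying that $p_{h'}(\A)$ satisfies them. These conditions are precisely what, in the degenerate case $h=0$, recover the classical statement that the Brauer algebra surjects onto the $O_n$-invariants in $V^{\otimes k}$.
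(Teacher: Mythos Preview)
Your overall architecture matches the paper's, but there is a genuine gap in how you invoke the Section~4 result. Theorem~\ref{thm:closed orbit} does \emph{not} assert that every graded, contraction-closed subalgebra containing $\sum_i e_i\otimes e_i$ is the invariant algebra of some reductive $G$; its crucial additional hypothesis is that the pointwise stabiliser $\stab(A)$ equals $\stab(w)$ for some $w\in A$ whose $O_n$-orbit is Zariski closed, and the conclusion is then directly $A=T(V)^{\stab(A)}$. So the theorem does not hand you an abstract $G$ to be identified with $\stab(h')$ afterwards via a sandwich; you must feed in the closed-orbit witness up front. You in fact possess this witness: with your notation, set $w=\sum_{k\le e} p_{h'}(\Sigma_k)$, note that under the natural embedding of $\field^{\N^n_e}$ into $\bigoplus_{k\le e}V^{\otimes k}$ this is the truncation $h'_e$, which by your own construction lies in a closed orbit, and use your basic-fragment observation (which is exactly Proposition~\ref{prop:stab}) to obtain $\stab(p_{h'}(\A))=\stab(h')=\stab(w)$ for $e$ large enough. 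Applied this way, Theorem~\ref{thm:closed orbit} yields $p_{h'}(\A)=T(V)^{\stab(h')}$ in one stroke, and your sandwich $T(V)^G=B\subseteq T(V)^{\stab(h')}\subseteq T(V)^G$ becomes unnecessary.

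A second, smaller gap is the phrase ``a choice compatible across degree bounds''. The unique closed orbit $C_d$ in the fibre through $h_d$ need not map onto $C_{d-1}$ under the projection $\pr_{d-1}$: a priori $\pr_{d-1}(C_d)$ is only an orbit whose closure contains $C_{d-1}$. The paper resolves this with a dimension argument: $\dim C_d$ is nondecreasing in $d$ and bounded by $\dim O_n$, hence stabilises at some $d_0$; for $d'\ge d\ge d_0$ both inequalities in $\dim C_d\le\dim\pr_d(C_{d'})\le\dim C_{d'}$ become equalities, forcing $\pr_d(C_{d'})=C_d$, and only then can one thread a single $h'\in R^*$ through all the $C_d$. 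One also needs Noetherianity of the coordinate ring of $O_n$ to ensure $\stab(h')=\stab(h'_e)$ for some finite $e$, so that a finite-degree $w$ as above suffices as the closed-orbit witness.
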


We will give a proof of Theorem \ref{thm:main2} in Section~5.
The next section deals with some preparations for the proof.

\begin{rem}	\label{rem:invariant}
Following Remark \ref{rem:coordinatefree}, the map $p_h$
can be understood in a coordinate-free manner as follows. As before,
a $k$-fragment $F$ gives rise to a polynomial function $\psi: V^{[k]
\cup VF} \to \field$. Since open ends have degree one, this map is
multilinear in the arguments labeled by $[k]$. Hence $\psi$ sits
naturally in $(V^*)^{\otimes k} \otimes R^{\otimes VF}$. Applying $1
\otimes h^{\otimes VF}$ to it gives an element of $(V^*)^{\otimes k}$,
which is also an element of $V^{\otimes k}$ by the natural map $V^*
\to V$ induced by the bilinear form. This is the element $p_h(F)$.
By definition the function $\psi$ is $O_n$-invariant. This implies that for
all $g\in O_n$ we have $gp_h(F)=p_{gh}(F)$.
\end{rem}

\section{Algebras of invariant tensors}
The proof in \cite{guus} of Theorem \ref{thm:real} depends on a result of Schrijver \cite{Sch} characterizing which subalgebras
 of the real tensor algebra arise as invariant algebras of subgroups of
 the real orthogonal group. Here we prove a variant of
 Schrijver's
 result valid over an algebraically closed field $\field$ of
 characteristic zero.

Below we state and prove a sufficient condition for a subalgebra of
$T(V)$ to be the algebra of $H$-invariants for some
reductive group $H\subseteq O_n$.
Derksen (private communication, 2006) completely characterized
which subalgebras of $T(V)$ are the algebras of
$H$-invariant tensors for some reductive group $H\subseteq
O_n$, but we do not need the full strength of his result
to prove Theorem \ref{thm:main2}.

First we introduce some terminology.  For $1\leq i<j\leq k\in \N$ the
\emph{contraction} $C^k_{i,j}$ is the unique linear map
\begin{align}
&C^k_{i,j}:V^{\otimes k}\to V^{\otimes k-2}	\text{ satisfying}
\\ 
v_1\otimes \ldots \otimes v_k&\mapsto \langle v_i,v_j \rangle v_1\otimes \ldots \otimes v_{i-1}\otimes v_{i+1} \ldots \otimes v_{j-1}\otimes v_{j+1}\otimes \ldots \otimes v_k.	\nonumber
\end{align}
Contractions are clearly $O_n$-equivariant, and so are compositions
of contractions. We will also use $\langle.,.\rangle$ for the induced
symmetric bilinear form on tensor powers $V^{\otimes k}$ of $V$. This
bilinear form is the composition of the tensor product $V^{\otimes k}
\times V^{\otimes k} \to V^{\otimes 2k}$ followed by $C^{2k}_{1,k+1}$,
and $C^{2k-2}_{1,k}$, etc.~up to $C^2_{1,2}$.

A graded subspace $A$ of $T(V)$ is called \emph{contraction closed}
if $C^k_{i,j}(a)\in A$ for all $a\in A\cap V^{\otimes k}$ and $i< j\leq
k\in \N$.  Note that for any subgroup $H\subseteq O_n$, $T(V)^H$ is a
graded and contraction closed subalgebra of $T(V)$.

\begin{theorem}	\label{thm:closed orbit}
Let $A\subseteq T(V)$ be a graded contraction closed subalgebra
containing $\sum_{i=1}^n e_i \otimes e_i$.  If the pointwise stabilizer
$\stab(A):=\bigcap_{a \in A} \stab(a)$ equals $\stab(w)$ for some $w\in
A$ whose $O_n$-orbit is closed in the Zariski topology,
then  $A=T(V)^{\stab(A)}$ and moreover $\stab(A)$ is a reductive group.
\end{theorem}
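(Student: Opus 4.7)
The theorem has two assertions, and I would treat them in sequence. For the reductivity of $\stab(A)=\stab(w)$, I would invoke Matsushima's criterion: since $O_n$ is reductive and, by hypothesis, $O_n\cdot w\cong O_n/\stab(w)$ is a closed (hence affine) subvariety of a finite-dimensional tensor space, $\stab(w)$ must be reductive.

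For the equality $A=T(V)^{\stab(A)}$, the inclusion $\subseteq$ is immediate from the definition of $\stab(A)$. For the reverse, I would take a homogeneous $t\in T(V)^{\stab(w)}$ of degree $k$, and let $U\subseteq T(V)$ be the smallest $O_n$-stable graded subspace containing $w$ (a finite direct sum of tensor powers of $V$). The orbit map $\mu\colon O_n\cdot w\to V^{\otimes k}$, $gw\mapsto gt$, is well defined by the $\stab(w)$-invariance of $t$ and is $O_n$-equivariant. Since $O_n\cdot w$ is closed in $U$ and $O_n$ is reductive, the short exact sequence of $O_n$-modules $0\to I\to \O(U)\to \O(O_n\cdot w)\to 0$ splits equivariantly, so $\mu$ lifts to an $O_n$-equivariant polynomial map $\phi\colon U\to V^{\otimes k}$ with $\phi(w)=t$.

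To conclude that $t\in A$, I would show that $A$ is closed under every $O_n$-equivariant linear map $V^{\otimes j}\to V^{\otimes \ell}$. By the First Fundamental Theorem for $O_n$, such maps are linear combinations of Brauer diagrams, each of which is built from contractions, tensorings with $\sum_i e_i\otimes e_i$, and permutations of tensor factors. Contraction-closure, the algebra structure, and the hypothesis that the form lies in $A$ yield the first two. Permutations follow from the identity $C^4_{1,3}(u\otimes v\otimes \sum_i e_i\otimes e_i)=v\otimes u$, which realizes the swap. Decomposing $\phi$ into multi-homogeneous components and polarizing in characteristic zero, each component is such an equivariant linear map applied to a tensor product of graded pieces of $w$; all of these lie in $A$, and hence $t=\phi(w)\in A$.

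The main obstacle I foresee is the equivariant extension step: the lift must be $O_n$-equivariant and not merely $\stab(w)$-equivariant, which is precisely where reductivity of $O_n$ enters, through the splitting of the defining ideal of the orbit. The subsequent translation of $\phi$ into Brauer-diagram operations is then a routine application of the First Fundamental Theorem.
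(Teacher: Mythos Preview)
Your argument is correct and follows essentially the same route as the paper: extend the orbit map $gH\mapsto gt$ to a polynomial on the ambient tensor space using that the orbit is closed, then invoke the First Fundamental Theorem for $O_n$ together with contraction-closedness and the algebra structure of $A$ to conclude $t\in A$; reductivity via Matsushima is identical. The only difference is cosmetic: you obtain an $O_n$-equivariant lift in one stroke by splitting $0\to I\to \O(U)\to \O(O_n w)\to 0$ as $O_n$-modules and then apply FFT in the form ``$A$ is closed under all $O_n$-equivariant linear maps $V^{\otimes j}\to V^{\otimes \ell}$'', whereas the paper lifts coordinatewise, applies the Reynolds operator at the end, and uses FFT in the form $T(V)^{O_n}\subseteq A$. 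Your swap trick $C^4_{1,3}\bigl(u\otimes v\otimes\sum_i e_i\otimes e_i\bigr)=v\otimes u$, iterated, indeed yields all permutations, so the Brauer-closure claim is fine.
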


\begin{proof}
Let $w\in A$ be such that $H:=\stab(w)$ equals $\stab(A)$.
Write $w=w_1+\ldots +w_t$ with $w_j\in W_j:=V^{\otimes n_j}$ the
homogeneous components of $w$, and assume that that $O_n w\subseteq
W:=\bigoplus_{j=1}^t W_j$ is closed. The map $O_n\to W$ given by $g
\mapsto gw$ induces an isomorphism $O_n/H\to O_nw$ of quasi affine
varieties (cf. \cite[Section 12]{Hum}). As $O_nw$ is closed, both varieties are affine and moreover
regular functions on $O_nw$ extend to regular functions (polynomials)
on $W.$ So they are generated by $W_j^*$ for $j=1,\ldots,t$.  This means
that any regular function on $O_n/H$ is a linear combination of functions
of the form
\begin{equation}
gH \mapsto \langle g w_1, u_1 \rangle^{d_1} \cdots \langle g w_t,	
u_t \rangle^{d_t}, \label{eq:reg} 
\end{equation}
where $d_1,\ldots,d_t$ are natural numbers and $u_j \in W_j$
for all $j$. Note that the right-hand side of \eqref{eq:reg} can be obtained
from 
\begin{equation}
 (g w_1^{\otimes d_1}) \otimes \cdots \otimes (g w_t^{\otimes
d_t}) \otimes u_1^{\otimes d_1} \otimes \cdots \otimes
u_t^{\otimes d_t}, \label{eq:ten} 
\end{equation}
by repeatedly contracting the $g w_j$ with the corresponding $u_j$. Since
$A$ is an algebra, the tensor products of the $w_j$
lie in $A$. More succinctly, we find that every regular function on
$O_n/H$ is a linear combination of functions of the form $g \mapsto K
[(gq) \otimes u]=K[q \otimes (g^{-1}u)]$ with $u \in T(V)$ and $q \in A$
in the same graded piece of $T(V)$, and $K$ a composition of contractions.

Clearly, $A$ is contained in $T(V)^H$. To prove the converse, let $a \in
(V^{\otimes k})^H$. Let $z_1,\ldots,z_s$ be a basis of $V^{\otimes k}$. 
Then we can write 
\begin{equation}
ga=\sum_{i=1}^s f_{i}(g)z_i,	\label{eq:ga}
\end{equation}
for all $g\in O_n$, where the $f_{i}$ are regular functions on $O_n$.
Since $gha=ga$ for all $h\in H$ it follows that the $f_{i}$ induce regular
functions on $O_n/H$. By the above, for each $i=1,\ldots, s,$ we can write
\begin{equation}
f_{i}(g)=\sum_j K_{i,j}[q_{i,j} \otimes g^{-1} u_{i,j}],
\end{equation}
for certain $q_{i,j}\in A$ and $u_{i,j}\in T(V)$ and sequences $K_{i,j}$
of contractions.  Multiplying both sides of $\eqref{eq:ga}$ by
$g^{-1}$ we get 
\begin{equation}
a=\sum_{i,j} K_{i,j} [q_{i,j} \otimes (g^{-1} u_{i,j}) \otimes
(g^{-1} z_i)],	\label{eq:a}
\end{equation}
where we have abused the notation $K_{i,j}$ to stand for the same series
of contractions as before, but leaving the last $k$ tensor factors $V$
(containing $g^{-1} z_i$) intact. Let $\rho$ be the Reynolds operator of $O_n$.
Then we have
\begin{equation}
a=\sum_{i,j} K_{i,j} [q_{i,j} \otimes \rho(u_{i,j} \otimes z_i)].		\label{eq:reynolds}
\end{equation}
In the case where $\field=\C$ this follows immediately by integrating
\eqref{eq:a} over $g$ in the compact real orthogonal group (with respect
to the Haar measure). In the general case this follows from standard
properties of the Reynolds operator, which we omit.

%let $Q_{i,j}, U_{i,j}, Z_i$ be the homogeneous components containing $w_{i,j},y_{i,j}$ and $z_i$ respectively.
%Define $U:=\bigoplus_{i,j} W_{i,j}\otimes Y_{i,j}\otimes Z_i$.
%Make $U$ a $O_n$-module letting $O_n$ only act on the last two tensors. 
%Finally, let $\phi:U\to \bigoplus_i Z_i$ be the linear map defined by $x\otimes y\otimes y\mapsto y^*(x)z$.
%Write $u=\sum_{i,j}w_{i,j}\otimes y_{i,j}\otimes z_i$. 
%Then we can write $u=u_1+u_2$ with $u_1\in U^{O_n}$ and
%$u_2$ orthogonal to $U^{O_n}$.
%In particular we find that $\phi(gu_2)=\phi(u_2)$ for all
%$g\in O_n$, implying that $\phi$ maps the space spanned by
%$\{gu_2\mid g\in O_n\}$ to a one-dimensional space.
%Schur's Lemma now implies that $\phi(u_2)=0$.
%This proves \eqref{eq:reynolds}.

To complete the proof note that $q_{i,j}\in A$ and $\rho(u_{i,j}\otimes
z_i)\in T(V)^{O_n}$. Now by the First Fundamental Theorem for the
orthogonal group (see for example \cite[Section 5.3.2]{Wal}), $T(V)^{O_n}$
is the smallest contraction-closed graded subalgebra of $T(V)$ containing
$\sum_i e_i \otimes e_i$, and hence is contained in $A$.  As $A$ is a
graded and contraction closed subalgebra of $T(V)$ it follows that $a\in
A$. Finally, since $O_n/H$ is affine, 
Matsushima's Criterion (see \cite{Arz} for an elementary proof) implies that $\stab(A)=H$
is reductive.
\end{proof}

\section{A proof of Theorem \ref{thm:main2}}
In this section we give a proof of Theorem \ref{thm:main2}.
First we show some results from \cite{guus} allowing us to apply Theorem \ref{thm:closed orbit}.
For completeness we will include the proofs.

We define a contraction operation for fragments.
For $1\leq i< j\leq k\in \N$, the \emph{contraction} $\Gamma^k_{i,j}:\F_k\to\F_{k-2}$ is defined as follows:
for $F\in \F_k$, $\Gamma^k_{i,j}(F)$ is the $(k-2)$-fragment obtained
from $F$ by connecting the half edges incident with the open ends labeled 
$i$ and $j$ into one single edge (without labeled vertex), and then
relabeling the remaining open ends $1,\ldots,k-2$ such that the order is preserved.

The following lemma shows that that $p_h$ preserves contractions.

\begin{lemma}	\label{lem:map p}
For $1\leq i<j\leq k \in \N$ and $F\in \F_k$,  
\begin{equation}
p_h(\Gamma^k_{i,j}(F))=C^k_{i,j}(p_h(F)).
\end{equation}
\end{lemma}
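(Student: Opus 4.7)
The plan is to unpack both sides directly from the definitions and compare, using only orthonormality of the basis $e_1,\ldots,e_n$ and the fact that the contraction operation on fragments corresponds, on the level of edge colorings, to forcing the two half edges to carry the same color.

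First I would expand the right-hand side. By linearity of $C^k_{i,j}$ and the defining formula of $p_h$,
\begin{equation*}
C^k_{i,j}(p_h(F)) = \sum_{\phi:[k]\to[n]} h_\phi(F)\, \langle e_{\phi(i)}, e_{\phi(j)}\rangle\, e_{\phi \setminus \{i,j\}},
\end{equation*}
where $\phi \setminus \{i,j\}$ denotes the induced map $[k-2]\to[n]$ obtained by deleting the $i$th and $j$th entries and reindexing in order. Since the basis is orthonormal, $\langle e_{\phi(i)}, e_{\phi(j)}\rangle = \delta_{\phi(i),\phi(j)}$, so after grouping summands by their restriction $\phi' := \phi \setminus \{i,j\}$, the right-hand side becomes
\begin{equation*}
\sum_{\phi':[k-2]\to[n]} \Bigl(\sum_{c \in [n]} h_{\phi'_c}(F)\Bigr) e_{\phi'},
\end{equation*}
where $\phi'_c:[k]\to[n]$ is the extension of $\phi'$ that inserts the color $c$ in both positions $i$ and $j$.

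Next I would expand the left-hand side. Writing out $p_h(\Gamma^k_{i,j}(F))$ yields
\begin{equation*}
p_h(\Gamma^k_{i,j}(F)) = \sum_{\phi':[k-2]\to[n]} h_{\phi'}\bigl(\Gamma^k_{i,j}(F)\bigr)\, e_{\phi'}.
\end{equation*}
The key combinatorial observation is a bijection between admissible edge colorings on the two sides: colorings $\psi'$ of $E\Gamma^k_{i,j}(F)$ that extend $\phi'$ on the remaining half edges correspond precisely to colorings $\psi$ of $EF$ that extend $\phi'$ on the $k-2$ retained half edges and satisfy $\psi(i) = \psi(j)$; the common value $c$ of $\psi$ on the two formerly-open half edges is the color of the new merged edge in $\Gamma^k_{i,j}(F)$. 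Under this bijection the vertex sets of $F$ and $\Gamma^k_{i,j}(F)$ coincide (only the two degree-one labeled vertices are erased, and they carried no $h$-weight), and the incidence multiset $\delta(v)$ at every vertex is unchanged in colored content. Therefore the product $\prod_{v} h(\prod_{e\in\delta(v)} x_{\psi(e)})$ agrees term-by-term, giving
\begin{equation*}
h_{\phi'}(\Gamma^k_{i,j}(F)) = \sum_{c\in[n]} h_{\phi'_c}(F).
\end{equation*}
Comparing with the expression for $C^k_{i,j}(p_h(F))$ above finishes the proof.

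The main obstacle is purely bookkeeping: one must carefully track how the labels $1,\ldots,k$ on open ends of $F$ get reindexed to $1,\ldots,k-2$ in $\Gamma^k_{i,j}(F)$, and verify that this order-preserving relabeling is compatible with the indexing convention used in defining $C^k_{i,j}$. No nontrivial idea is required beyond the color-matching bijection described above.
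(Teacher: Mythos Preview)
Your proof is correct and follows essentially the same approach as the paper's own proof: both expand $C^k_{i,j}(p_h(F))$ using orthonormality to kill the terms with $\phi(i)\neq\phi(j)$, and then identify the remaining sum with $p_h(\Gamma^k_{i,j}(F))$ via the color-matching bijection you describe. The paper compresses the bijection step into a single equality, whereas you spell it out explicitly, but the content is the same.
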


\begin{proof}
Let $1\leq i<j\leq k$ and let $F\in \F_k$.
Note that for $\phi:[k]\to[n]$, the contraction of $e_\phi$ is contained in $\{e_{\psi}\mid \psi:[k-2]\to[n]\}$ if $\phi(i)=\phi(j)$ and is zero otherwise. The following equalities now prove the lemma:
\begin{align}
C^k_{i,j}(p_h(F))&=\sum_{\phi:[k]\to[n]}h_{\phi}(F) C^{k}_{i,j}(e_\phi)=\sum_{\substack{\phi:[k]\to[n]\\ \phi(i)=\phi(j)}}h_{\phi}(F) C^{k}_{i,j}(e_\phi)	\nonumber
\\
&= \sum_{\psi:[k-2] \to [n]}h_{\psi}(\Gamma^k_{i,j}(F))e_\psi=p_h(\Gamma^k_{i,j}(F)).
\end{align}
\end{proof}

The next proposition shows that $\stab(h)$ is equal to the pointwise stabilizer of $p_h(\A)$.

\begin{prop}	\label{prop:stab}
Let $h\in R^*$. Then $\stab(h)=\stab(p_h(\A))$.
\end{prop}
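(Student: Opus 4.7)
My plan is to prove the two inclusions separately, with Remark~\ref{rem:invariant} doing most of the work. That remark supplies the key identity $gp_h(F)=p_{gh}(F)$ for every $g\in O_n$ and every fragment $F$. For the forward inclusion $\stab(h)\subseteq\stab(p_h(\A))$, I would simply observe that if $gh=h$ then for any $F\in\F_k$ we have $gp_h(F)=p_{gh}(F)=p_h(F)$; since $g$ acts linearly and $p_h(\A)$ is spanned (over $\field$) by the images $p_h(F)$ of fragments, $g$ fixes every element of $p_h(\A)$.

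For the reverse inclusion, the idea is to exhibit for each $k\in\N$ a single $k$-fragment whose image under $p_h$ records the values of $h$ on all degree-$k$ monomials of $R$. The natural choice is the ``star'' fragment $S_k\in\F_k$ consisting of one interior vertex $v$ joined by $k$ half edges to the $k$ labeled open ends. Since $ES_k=[k]$, there is no freedom in extending $\phi:[k]\to[n]$, and the definition of $p_h$ collapses to
\[
p_h(S_k)=\sum_{\phi:[k]\to[n]}h(x_{\phi(1)}\cdots x_{\phi(k)})\,e_\phi.
\]
Now assume $g\in\stab(p_h(\A))$; in particular $gp_h(S_k)=p_h(S_k)$ for every $k$. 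By Remark~\ref{rem:invariant} this reads $p_{gh}(S_k)=p_h(S_k)$, and comparing the coefficients of $e_\phi$ on both sides gives $(gh)(x_{\phi(1)}\cdots x_{\phi(k)})=h(x_{\phi(1)}\cdots x_{\phi(k)})$ for every $k$ and every $\phi:[k]\to[n]$. As such monomials span $R=\field[x_1,\ldots,x_n]$ as a $\field$-vector space, I conclude $gh=h$, so $g\in\stab(h)$.

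The only real conceptual step is selecting the star fragments as a ``test family'' that separates elements of $R^*$; everything else is direct unwinding of definitions and an application of the coordinate-free identity from Remark~\ref{rem:invariant}. I do not foresee a serious technical obstacle.
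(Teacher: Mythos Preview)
Your proposal is correct and follows essentially the same route as the paper's own proof: both use the identity $gp_h(F)=p_{gh}(F)$ from Remark~\ref{rem:invariant} for the forward inclusion, and for the reverse inclusion both test against the one-vertex $k$-fragments (what the paper calls the \emph{basic $k$-fragments} $F_k$, cf.~equation~\eqref{eq:hpol}) to recover the values of $h$ on all monomials. Your ``star'' fragment $S_k$ is exactly the paper's $F_k$, and your explicit computation of $p_h(S_k)$ is the content of \eqref{eq:hpol}.
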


Before we give a proof, we introduce some terminology.
The \emph{basic $k$-fragment $F_k$} is the $k$-fragment that contains one vertex and $k$ open ends connected to this vertex, labeled $1$ up to $k$.
For a map $\phi:[k]\to[n]$, we define the monomial $x^\phi\in R$ by $x^{\phi}:=\prod_{i=1}^k x_{\phi(i)}$. 
It is not difficult to see that
\begin{equation}
h_{\phi}(F_k)=h(x^{\phi}).	\label{eq:hpol}
\end{equation}

\begin{proof}[Proof of Proposition \ref{prop:stab}]
By Remark \ref{rem:invariant} we have for all $F\in \F_k$ and $g\in O_n$ that
\begin{equation}
gp_h(F)=p_{gh}(F).	\label{eq:fragmentinv}
\end{equation}
This immediately implies that $\stab(h)\subseteq \stab(p_h(\A))$. 
To see the opposite inclusion, consider \eqref{eq:fragmentinv} for basic fragments.
Using \eqref{eq:hpol} we find that $g\in \stab(p_h(\A))$ implies that $gh=h$.
\end{proof}

Now we can give a proof of Theorem \ref{thm:main2}.

\begin{proof}[Proof of Theorem \ref{thm:main2}]
The first step of the proof uses some ideas of \cite{DGLRS}.
Consider
\begin{equation}
S:=\field[y_{\alpha}\mid \alpha\in \N^n]
\end{equation}
a polynomial ring in the infinitely many variables $y_{\alpha}$. These
variables are in bijective correspondence with the monomials of $R$
via $y_\alpha \leftrightarrow x_1^{\alpha_1} \cdots x_n^{\alpha_n}$.
Let $\N^n_d=\{\alpha\in \N^n\mid |\alpha|\leq d\}$ and let $S_d\subset S$ be the ring of polynomials in the variables $y_{\alpha}$ with 
$\alpha\in \N^n_d$. 
Furthermore, let $\G_d$ be the set of all graphs of maximum degree $d$.
Let $\pi:\field \G\to S$ be the linear map defined by 
\begin{equation}
G \mapsto \sum_{\phi:EG\to [n]} \prod_{v\in VG} y_{\phi(\delta(v))},
\end{equation}
for any $G\in \G$, where we consider the multiset $\phi(\delta(v))$ as an element of $\N^n$.
Note that $\pi(G)(y)=f_y(G)$ for all $G\in \G$.

The orthogonal group acts on $S$ via the bijection between the variables of $S$ and the monomials of $R$.
Then, as was observed by Szegedy \cite{Szegedy} (see also \cite{DGLRS}), for any $d$, 
\begin{equation} \label{eq:graphsgeninvs}
\pi(\field \G_d)=S_d^{O_n}.
\end{equation}
Let 
\begin{align}
 Y_d:=\{y\in \k^{\N^n_d}\mid \pi(G)(y)=f_h(G) \text{ for all } G\in \G_d\}	\nonumber
\\
Y:=\{y\in \k^{\N^n}\mid \pi(G)(y)=f_h(G) \text{ for all } G\in \G\}
\end{align}
Then $Y$ is nonempty, as it contains $h$. 
Moreover, by \eqref{eq:graphsgeninvs} the variety $Y_d$ is a fiber of
the quotient map $\k^{\N^n_d} \to \k^{\N^n_d}//O_n$. In particular, $Y_d$ 
contains a unique closed orbit $C_d$ (cf. \cite[Section 8.3]{Hum} or \cite[Satz 3, page 101]{kraft}).

Let $\pr_d:\k^{\N^n_{d'}}\to \k^{\N^n_d}$ be the
projection sending $y$ to $y_d:=y|_{\k^{\N^n_d}}$, for any $d'\geq d$.  Note
that $\pr_d(Y_{d'})\subseteq Y_d$ for $d'\geq d$.  Since $\pr_d(C_{d'})$
is an $O_n$-orbit, its closure contains $C_d$. Hence we have 
\begin{equation}
\dim C_d\leq \dim \pr_d(C_{d'})\leq \dim C_{d'},	\label{eq:closed orbits}
\end{equation}
where the left-most inequality is strict unless $\pr_d(C_{d'})$ equals
the closed orbit $C_d$. As $\dim C_d$ is bounded from above by $\dim O_n$
for all $d$, we may choose $d_0$ where the former dimension reaches its
maximal value.  Then for $d' \geq d \geq d_0$ both inequalities in \eqref{eq:closed orbits} are
equalities, and we have $\pr_d(C_{d'})=C_d$. We therefore find a sequence
$h'_{d_0} \in C_{d_0},h'_{d_0+1} \in C_{d_0+1},\ldots$ in which every
next element is projected onto the previous one. That the element $h'\in
\k^{\N^n}$ determined by $\pr_d(h') = h'_{d} \in C_d$ for all $d \geq d_0$
has the property that $f_{h'}=f_h$ follows from the fact that each $h'_d$
lies in $Y_d$. 

We will now show that this $h'$ is as required.  First note that
$\stab(h')=\cap_{e\geq 0} \stab(h'_e)$.  Since the ring of regular
functions of $O_n$ is Noetherian it follows that there exists $e$ such
that $\stab(h')=\stab(h'_e)$. We may assume that $e\geq d_0$.
Let $F=\sum_{0\leq k\leq e}F_k$, the sum in $\A$ of the first $e+1$
basic fragments.  Then
\begin{equation}
\stab(p_{h'}(F))=\stab(h').
\end{equation}
Write $w=p_{h'}(F)$, and note that $w$ is the image of $\pr_e(h')$ under the natural embedding of 
$\k^{\N^n_e}$ into $\bigoplus_{k=0}^e V^{\otimes k}$. 
In particular we can view $C_e$ and $Y_e$ as subvarieties of $\bigoplus_{k=0}^e V^{\otimes k}$. 

It is clear that $p_{h'}(\A)$ is a graded algebra. It is contraction
closed by Lemma \ref{lem:map p}.  By Proposition \ref{prop:stab} we
have that $\stab(p_{h'}(\A))=\stab(w)$.  Moreover, the orbit of $w$ is
Zariski closed, as the orbit of $\pr_e(h')$ is the unique closed orbit
$C_e$ in $Y_e$. Also, we have $\sum_i e_i \otimes e_i\in p_{h'}(\A)$ as 
it is the image of the edge whose both endpoints are open ends.
So we can apply Theorem \ref{thm:closed orbit}, to find that $p_{h'}(\A)=T(V)^{\stab(h')}$.
Moreover, we find that $\stab(h')$ is reductive.
From this we conclude that $h'$ is nondegenerate.
Indeed, suppose that $p_{h'}(\gamma)\neq 0$ for some $\gamma\in \A$.
Then there exists $v\in T(V)$ such that $\langle p_{h'}(\gamma),v\rangle\neq 0$.
Since $\stab(h')$ is reductive we can write $v=v_1+v_2$ with $v_1\in T(V)^{\stab(h')}$ and $v_2$ in a different isotypic component. 
Using Schur's Lemma and the fact that $O_n$ preserves the bilinear form, we find that $\langle p_{h'}(\gamma),v_2\rangle= 0$.
 As $v_1\in p_{h'}(\A)$ it follows that $h'$ is nondegenerate.
\end{proof}

\begin{rem}
The elements $h' \in R^*$ with the property that for sufficiently
large $d$ the projection $h'_d$ lies in the unique closed orbit in the
closure of the orbit of $h_d$ form a single orbit under the orthogonal
group. This follows from the slightly stronger observation that for $d$
sufficiently large and $d' \geq d$ the projection from $C_{d'}$ to $C_d$
is not only surjective and dimension preserving, but even an isomorphism
(since by Noetherianity point stabilizers cannot shrink indefinitely).
\end{rem}

\section{One-parameter groups and spin models}

There is a beautiful criterion for closedness of orbits involving {\em
one-parameter subgroups} of $O_n$, i.e., homomorphisms $\lambda:\k^*
\to O_n$ of algebraic groups. For any such homomorphism there
exists a basis $v_1,\ldots,v_n$ of $V$ such that $\langle v_i,v_j
\rangle=\delta_{n+1-i,j}$ (so that the Gram matrix of the basis has
zeroes everywhere except ones on the longest anti-diagonal; we will call
such bases {\em cano\-nical}) and such that $\lambda(t)v_i=t^{d_i}
v_i$ for some integral {\em weights} $d_1 \geq \cdots \geq d_n$
satisfying $d_i=-d_{n+1-i}$ for all $i$. This follows, for instance,
from \cite[\S 23.4]{Borel} (ignoring the subtle rationality issues
there as $\field$ is algebraically closed) and the fact that all maximal tori are conjugate \cite[\S
11.3]{Borel}. Conversely, given a canonical basis $v_1,\ldots,v_n$
and such a sequence of $d_i$'s, the $\lambda: \k^* \to O_n$ defined by
$\lambda(t)v_i=t^{d_i} v_i$ is a one-parameter subgroup of $O_n$.

The criterion alluded to is that whenever $W$ is a finite-dimensional
$O_n$-module, and $w$ is an element of $W$, there exists a one-parameter
subgroup $\lambda$ such that $\lim_{t \to 0} \lambda(t)w$ exists and lies
in the unique closed orbit in the closure of $O_n w$ (for example, see
\cite[Theorem 6.9]{Popov}). Here the existence of the limit by definition
means that the morphism $\k^* \to W,\ t \mapsto \lambda(t)w$ extends to
$\k$. It then does so in a unique manner, and the value at $0$ is declared
the limit. Put differently, just like $V$ the module $W$ decomposes into
a direct sum of weight spaces, and the condition is that all components
of $w$ in $\lambda$-weight spaces corresponding to negative weights are
zero, and the component of $w$ in the zero weight space is the limit.

\begin{example}
In our example on page \pageref{ex:nonclosed}, $h \in R^* = (\bigoplus_e
S^e V^*)^*$ is zero on all graded pieces $S^e V^*$ except on $S^1
V^*=V^*$. The restriction of $h$ to that space is an element of
$(V^*)^*=V$, namely, equal to $v_1:=e_1 + i e_2$. This is an
isotropic vector relative to the bilinear form, and so is its complex
conjugate $v_2:=e_1-ie_2$. The linear map $V \to V$ scaling $v_1$ with
$t \in \field$ and $v_2$ with $t^{-1}$ is an element of the orthogonal
group. Explicitly, this gives the one-parameter subgroup
\begin{equation}
\lambda: t \mapsto 
\frac{1}{2t}
\begin{bmatrix}
1 + t^2 & i - i t^2\\
-i + it^2 & 1 + t^2
\end{bmatrix} \in O_2
\end{equation}
with the property that $\lim_{t \to 0} \lambda(t) h_e=0$ for
all $0$. 
\end{example}

We will now apply the one-parameter group criterion to an an important class
of vertex models whose partition functions include the partition functions of so-called {\em spin models} (cf. \cite{HJ}) as was shown by B. Szegedy in \cite{Szegedy}. 
Let
$u_1,\ldots,u_m$ be distinct vectors in $V$ and let $a_1,\ldots,a_m$
be nonzero elements of $\k$. Then define
\begin{equation}
h(p):=\sum_{i=1}^m a_i p(u_i), 
\end{equation}
for $p\in R$.
This $h \in R^*$ is a vertex model and we write $h_e$ for the restriction
of $h$ to polynomials of degree at most $e$. We have the following
characterization.

\begin{theorem} \label{thm:Spin}
The orbit $O_n h_e$ is closed for sufficiently large $e$ if and only
if the $O_n$-orbit of $(u_1,\ldots,u_m)$ in $V^m$ is closed, and this
happens if and only if the restriction of the bilinear form to the span
of the $u_i$ is nondegenerate.
\end{theorem}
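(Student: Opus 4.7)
The plan is to prove the chain of equivalences (a) $\Leftrightarrow$ (b) $\Leftrightarrow$ (c) in the theorem, using the one-parameter subgroup criterion recalled above and the identification $R_d^* \cong S^d V$ under which $h|_{R_d} = \sum_i a_i u_i^d$.

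The equivalence (b) $\Leftrightarrow$ (c) is classical orthogonal GIT: by the First Fundamental Theorem the ring of $O_n$-invariants on $V^m$ is generated by the Gram matrix entries $\langle u_i, u_j\rangle$, and the closed orbit in the fibre of the quotient map is the one in which the span has the minimum possible dimension (equal to the rank of the Gram matrix), i.e.\ in which the form on the span is nondegenerate. An explicit one-parameter witness in the degenerate case is obtained by picking a nonzero $v$ in the radical of $U = \mathrm{span}(u_1,\ldots,u_m)$, extending to a hyperbolic pair $v,v^*$ with $\langle v, v^*\rangle = 1$, and acting by $t$ on $v$, $t^{-1}$ on $v^*$ and trivially on $\{v,v^*\}^\perp$: since $\langle u_i, v\rangle = 0$ each $u_i$ has zero $v^*$-component, so the limit exists, equals $u_i$ minus its $v$-component, and has strictly smaller span.

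The heart of (a) $\Leftrightarrow$ (b) is the following Key Lemma: \emph{for $e$ sufficiently large, a one-parameter subgroup $\lambda$ of $O_n$ satisfies $\lim_{t\to 0}\lambda(t) h_e$ exists if and only if $\lim_{t\to 0}\lambda(t) u_i$ exists for every $i$.} The ``if'' direction is clear from $h_e = \sum_i a_i \sum_{d=0}^e u_i^d$. For the converse, fix a canonical basis $v_1,\ldots,v_n$ diagonalising $\lambda$ with weights $d_j$ and expand $u_i = \sum_j c_{i,j} v_j$; pairing $\lambda(t) h$ with a monomial $y^\alpha = \prod_j y_j^{\alpha_j}$ in the dual coordinates gives $t^{\sum_j d_j \alpha_j}\sum_i a_i \prod_j c_{i,j}^{\alpha_j}$, so existence of the limit forces $\sum_i a_i\prod_j c_{i,j}^{\alpha_j} = 0$ whenever $|\alpha| \le e$ and $\sum_j d_j\alpha_j < 0$. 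These relations express the vanishing of the functional $\sum_i a_i \mathrm{ev}_{u_i}$ on such monomials; for $e$ sufficiently large, Waring-type separation of distinct evaluation functionals on polynomials of sufficiently high degree combined with a Vandermonde argument on the scalars $c_{i,j}$ forces $c_{i,j} = 0$ for every $i$ and every $j$ with $d_j < 0$. The distinctness of the $u_i$'s prevents partial cancellations from propagating up the weight filtration.

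Granting the Key Lemma, (a) $\Leftrightarrow$ (b) follows from the criterion. For (b) $\Rightarrow$ (a), assume $O_n(u_i)$ is closed; any $\lambda$ with $\lim_{t\to 0}\lambda(t) h_e$ existing yields, by the lemma, limits $u_i' := \lim_{t\to 0}\lambda(t) u_i$ for all $i$, hence $(u_i')\in\overline{O_n(u_i)} = O_n(u_i)$, so picking $g\in O_n$ with $g u_i = u_i'$ gives $g h_e = \lim_{t\to 0}\lambda(t) h_e\in O_n h_e$. Conversely, if $O_n(u_i)$ is not closed, the criterion provides a destabilising $\lambda$ with $(u_i') := \lim_{t\to 0}\lambda(t)(u_i) \in\overline{O_n(u_i)}\setminus O_n(u_i)$ and $h_e' := \lim_{t\to 0}\lambda(t) h_e$; were $h_e' = g h_e$ for some $g\in O_n$, Waring uniqueness (valid for $e$ large) would give a permutation $\sigma$ of $[m]$ with $a_{\sigma(i)} = a_i$ and $u_i' = g u_{\sigma(i)}$, so the permuted tuple $(u_{\sigma(i)})_i = g^{-1}(u_i')$ would lie in $\overline{O_n(u_i)}$ in an $O_n$-orbit of the same dimension as $O_n(u_i)$, forcing the two orbits to coincide (a proper boundary orbit has strictly smaller dimension) and hence $(u_i')\in O_n(u_i)$, a contradiction. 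The main obstacle is the Key Lemma: carrying out the Vandermonde/separation extraction across the full weight filtration while controlling partial cancellations at each stratum.
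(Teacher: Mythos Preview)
Your overall architecture is close to the paper's, but the Key Lemma as you state it is \emph{false}: there is no choice of $e$ that works uniformly for all one-parameter subgroups $\lambda$. Take $n=4$ with a canonical basis $v_1,\ldots,v_4$, set $u_1=v_1+v_3$, $u_2=2v_1+v_3$, $a_1=1$, $a_2=-1$ (so the span $U=\langle v_1,v_3\rangle$ is totally isotropic), and let $\lambda_N$ have weights $(N,1,-1,-N)$. Then $(\lambda_N(t)h)(x^\alpha)=t^{N\alpha_1-\alpha_3}(1-2^{\alpha_1})$ for $\alpha_2=\alpha_4=0$ and $0$ otherwise. The only monomials with nonzero coefficient have $\alpha_1\ge 1$, and for those the $t$-exponent is $N\alpha_1-\alpha_3\ge N-(e-1)$. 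Hence for every fixed $e$, choosing $N\ge e-1$ makes $\lim_{t\to 0}\lambda_N(t)h_e$ exist (indeed, it is the zero functional), while $\lambda_N(t)u_1=t^Nv_1+t^{-1}v_3$ diverges. So the ``only if'' half of your Key Lemma fails for every $e$.

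The reason your Vandermonde/separation sketch cannot be completed is exactly this non-uniformity: without nondegeneracy of $U$, the points $u_1,\ldots,u_l$ with $x_b(u_i)\neq 0$ may be separated only by coordinates $x_j$ with $j<n+1-b$, and then $d_j$ can be much larger than $|d_b|$. To force $\alpha\cdot d<0$ you would need to multiply by a power of $x_b$ whose exponent grows like $d_1/|d_b|$, which is unbounded as $\lambda$ varies. The paper avoids this by first establishing (b)$\Leftrightarrow$(c) and then proving the hard implication in the form (c)$\Rightarrow$(a): nondegeneracy of $U$ gives $U\cap\langle v_1,\ldots,v_{n-b}\rangle=0$, so the coordinates $x_{n-b+1},\ldots,x_b$ already separate the $u_i$ with $x_b(u_i)\neq 0$; the cubic monomials $x_b$ and $x_j x_b^2$ for $n-b+1\le j\le b$ then all satisfy $\alpha\cdot d<0$ (since $d_j+2d_b\le -d_b+2d_b=d_b<0$), yielding a degree bound $e\ge 3m$ independent of $\lambda$. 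Your scheme is repairable along these lines---restrict the ``only if'' direction of the Key Lemma to the case where (c) holds---but not as stated. Your argument for (not b)$\Rightarrow$(not a) via Waring uniqueness and the equal-dimension-of-stabiliser observation is fine and is essentially how the paper handles that direction.
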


\begin{proof}
The second equivalence is well known, but we include an argument
as a warm-up for the first equivalence.  First, let $U \subseteq V$
be the span of the $u_i$. If the restriction of the form to $U$ is
degenerate, then we may choose a canonical basis $v_1,\ldots,v_n$ of
$V$ such that that $U$ is spanned by $v_a,\ldots,v_b$ with $b<n+1-a$
(in particular, $v_a$ then lies in the radical of the restriction of
the form to $U$).  Now let $\lambda:\k^* \to O_n$ be the one-parameter
group with $\lambda(t)v_j=t v_j$ for $j \leq a$, $\lambda(t)v_j=v_j$
for $a<j<n+1-a$ and $\lambda(t)v_j=t^{-1}v_j$ for $j \geq n+1-a$.
Then $\lim_{t \to 0} \lambda(t) u$ exists for all $u \in U$ and
lies in the span of $v_{a+1},\ldots,v_b$, a proper subspace of
$U$. Hence $\lim_{t \to 0} \lambda(t)(u_1,\ldots,u_m)$ does not
lie in the orbit of $(u_1,\ldots,u_m)$, and the latter orbit is not
closed. 

For the converse, assume that the restriction of the form to
$U$ is nondegenerate. By the one-parameter group criterion, to prove
closedness of $O_n(u_1,\ldots,u_m)$ it suffices to prove that whenever
$\lambda$ is a one-parameter subgroup of $O_n$ for which the limit
$\lim_{t \to 0}\lambda(t)(u_1,\ldots,u_m)=(u_1',\ldots,u_m')$ exists,
that limit actually lies in the orbit of $(u_1,\ldots,u_m)$. 
Now since the Gram matrix of $(u_1',\ldots,u_m')$
equals that of $(u_1,\ldots,u_m)$, and since its rank equals the
dimension of $U$, we find that the span $U'$ of the $u_i'$ is again
a nondegenerate subspace of $V$ of the same dimension as $U$. Hence
the stabilizers of $(u_1,\ldots,u_m)$ and $(u_1',\ldots,u_m')$ are the
isomorphic groups $O(U^\perp)$ and $O((U')^\perp)$. In particular they have the same
dimension. Since orbits at the ``boundary'' of an orbit have strictly
larger-dimensional point stabilizers, we conclude that
$(u_1',\ldots,u_m')$ is in the $O_n$-orbit of $(u_1,\ldots,u_m)$. 

Now for the first part of the theorem, one direction is easy: if the orbit of
$(u_1,\ldots,u_m)$ is not closed, then there exists a one-parameter
subgroup $\lambda:\k^* \to O_n$ such that $\lambda(t)u_i \to u_i' \in
V,\ i=1,\ldots,m$ for $t \to 0$ and such that $(u_1',\ldots,u_m')$
does not lie in the orbit of $(u_1,\ldots,u_m)$. Then $\lim_{t \to 0}
h_e=:h'_e$ exists and maps any polynomial $p$ of degree at most $e$ to
$\sum_{i=1}^m a_i p(u_i')$. It is not hard to see that for sufficiently
large $e$, the restriction $h'_e$ is {\em not} in the orbit of $h_e$,
roughly because the set of points $u_1,\ldots,u_m$ can be recovered from
$h$ in an $O_n$-equivariant manner (as the set of points defined by the
largest ideal of $R$ contained in the kernel of $h$), and then so can
the $a_i$ by plugging in Lagrange interpolation polynomials at the $u_i$.

For the converse, assume that $u_1,\ldots,u_m$ span a nondegenerate
subspace $U$ of $V$. We will prove that the orbit of $h_e$ is closed for
$e \geq 3m$. Let $\lambda:\k^* \to O_n$ be a one-parameter subgroup such
that $\lim_{t \to 0} \lambda(t)h_e$ exists; we will show that it lies
in the orbit of $h_e$. Let $v_1,\ldots,v_n$ be a canonical basis of $V$
with $\lambda(t)v_j=t^{d_j}v_j$ for weights $d_1 \geq \cdots \geq d_n$.
Let $x_1,\ldots,x_n$ be the basis of $V^*$ dual to $v_1,\ldots,v_n$. For
any monomial $x^\alpha, \alpha \in \N^n$ we have
\begin{equation}
 (\lambda(t) h)(x^\alpha)=h (\lambda(t)^{-1} x^\alpha)=
h(t^{\alpha_1 d_1+\ldots+\alpha_n d_n}x^{\alpha})=
t^{\alpha \cdot d} \sum_{i=1}^m a_i
x^\alpha(u_i).
\end{equation}
By assumption, if $x^\alpha$ is a monomial of degree at most $e$, the
limit for $t \to 0$ of this expression exists. If $\alpha \cdot d=\alpha_1
d_1 + \ldots + \alpha_n d_n$ is negative, this means that $\sum_{i=1}^m
a_i x^\alpha(u_i)$ must be zero. By taking linear combinations, this
implies that for any polynomial $p$ of degree at most $e$ in which
only monomials $x^\alpha$ with $\alpha \cdot d<0$ appear, we have
$h(p)=\sum_{i=1}^m a_i p(u_i)=0$.

In what follows, we exclude the trivial cases where $m=0$
and where $m=1$ and $u_1$ is the zero vector; in both of these
cases, the orbit of $h$ is just a single point. Next let $b \in
\{1,\ldots,n\}$ be the maximal index with $x_b(U) \neq \{0\}$, and
order the $u_i$ such that $x_b(u_1),\ldots,x_b(u_l) \neq 0\ (l>0)$ and
$x_b(u_{l+1}),\ldots,x_b(u_m)=0$. If $d_b$ is nonnegative, then all
$u_i$ lie in the sum of the weight spaces with nonnegative weights, so
that $\lim_{t \to 0}\lambda(t)(u_1,\ldots,u_m)$ exists, and by the second
equivalence we know that it lies in the orbit of $(u_1,\ldots,u_m)$. Then
also $h_e$ and $\lim_{t \to 0} \lambda(t)h_e$ lie in the same orbit. Hence
we may assume that $d_b$ is negative (in particular, $b$ is larger
than $\frac{n}{2}$).

By maximality of $b$, the coordinates $x_{b+1},\ldots,x_n$
vanish identically on $U$, and this means that $U$ lies in the
subspace of $V$ perpendicular to $v_1,\ldots,v_{n-b}$. Since $U$ is
nondegenerate, it does not contain a nonzero linear combination of
$v_1,\ldots,v_{n-b}$. This means, in particular, that the coordinates
$x_{n-b+1},\ldots,x_b$ together separate the points $u_1,\ldots,u_l$.
Then so do the monomials \\$x_b,x_{n-b+1}x_b^2,\ldots,x_{b-1}x_b^2$.
Note that the dot product $\alpha \cdot d$ is negative for each of these
(e.g., for the second, it equals $d_{n-b+1}+2 d_b=d_b<0$ and from
there the dot product decreases weakly to the right).  It follows
that there exists a linear combination $p$ of those (at most) cubic
monomials for which $p(u_1),\ldots,p(u_l)$ are distinct and nonzero.
Then, by the above, the vector $(a_1,\ldots,a_l)^T$ is in the kernel of
the Vandermonde matrix
\begin{equation}
\begin{bmatrix}
p(u_1) & \ldots & p(u_l) \\
p(u_1)^2 & \ldots & p(u_l)^2 \\
\vdots & & \vdots\\
p(u_1)^l & \ldots & p(u_l)^l 
\end{bmatrix},
\end{equation}
since the degree of $p^l$ is $3l \leq e$. Hence $a_1,\ldots,a_l$ are
all zero, contrary to the assumption that all $a_i$ are nonzero. This
proves that the orbit of $h_e$ is closed for $e \geq 3m$.
\end{proof}

An immediate consequence of this result and of the proof of
Theorem~\ref{thm:main} is the following; we omit the proof of the
implication.

\begin{corollary}
If $u_1,\ldots,u_m$ span $V$, $a_1,\ldots,a_m$ are non-zero elements of
$\field$, and $h$ is defined as above, then for all $k$ the
rank of $M_{f_h,k}$ equals $\dim (V^{\otimes k})^H$, where $H$ is the finite group
consisting of all orthogonal transformations of $V$ mapping each $u_i$
to some $u_j$ with $a_j$ equal to $a_i$.
\end{corollary}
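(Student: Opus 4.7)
The strategy is to argue that one may take $h'=h$ in Theorem~\ref{thm:main}, and then to identify $\stab(h)$ explicitly with the group $H$.

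For the first step I would invoke Theorem~\ref{thm:Spin}. Since $u_1,\ldots,u_m$ span $V$, the restriction of the bilinear form to their span equals the form on $V$, which is nondegenerate, so $O_n h_e$ is closed for all $e$ sufficiently large (indeed for $e \geq 3m$). Inspecting the proof of Theorem~\ref{thm:main2}, the element $h'_d$ was chosen in the unique closed $O_n$-orbit $C_d \subseteq Y_d$ for $d \geq d_0$; but $h_d$ itself lies in $Y_d$ and, for $d$ large, generates a closed orbit, so $h_d \in C_d$. After enlarging $d_0$ if necessary, the compatible sequence $h'_{d_0}, h'_{d_0+1},\ldots$ may simply be taken to equal $h_{d_0}, h_{d_0+1},\ldots$, giving $h'=h$. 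Theorem~\ref{thm:main} then yields $\rk(M_{f_h,k}) = \dim(V^{\otimes k})^{\stab(h)}$.

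For the second step I would prove $\stab(h)=H$ by two inclusions. The inclusion $H\subseteq\stab(h)$ is a one-line computation: for $g\in H$ with $gu_i = u_{\sigma(i)}$ and $a_{\sigma(i)}=a_i$, one has $(gh)(p)=h(g^{-1}p)=\sum_i a_i p(gu_i)=\sum_i a_{\sigma(i)} p(u_{\sigma(i)}) = h(p)$. For the reverse inclusion, the key point is to recover the set $\{u_1,\ldots,u_m\}$ from $h$ in an $O_n$-equivariant manner, as suggested at the end of the proof of Theorem~\ref{thm:Spin}. Concretely, let $I:=\{p\in R : p(u_i)=0\text{ for all }i\}$, and verify that $I$ is the largest ideal of $R$ contained in $\ker h$: if $J\subseteq\ker h$ is an ideal and $p\in J$, then for every $q\in R$, $h(pq)=\sum_i a_i p(u_i)q(u_i)=0$, and substituting a Lagrange interpolation polynomial $p_j$ with $p_j(u_i)=\delta_{ij}$ (which exists since the $u_i$ are distinct points in $\field^n$) forces $p(u_j)=0$, so $p\in I$. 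Any $g\in\stab(h)$ therefore stabilizes $I$ and so permutes its vanishing locus $\{u_1,\ldots,u_m\}$ by some $\sigma$; applying $gh=h$ to $p_j$ then gives $a_{\sigma^{-1}(j)}=a_j$ for all $j$, so $g\in H$.

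Finiteness of $H$ is immediate: an element of $H$ is an orthogonal transformation permuting the spanning set $\{u_1,\ldots,u_m\}$, hence determined by its action on this set, and $H$ thereby embeds into the symmetric group $S_m$. The only real obstacle is the recovery of the $u_i$ from $h$ via the largest-ideal description; once that is in hand, everything else is routine.
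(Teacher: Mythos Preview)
Your proof is correct and matches the approach the paper sketches: the paper explicitly omits the proof of this corollary, but in the proof of Theorem~\ref{thm:Spin} it already indicates both ingredients you use---that the $u_i$ are recovered $O_n$-equivariantly as the zero set of the largest ideal of $R$ contained in $\ker h$, and that the $a_i$ are then recovered via Lagrange interpolation polynomials. Combining this description of $\stab(h)$ with Theorem~\ref{thm:Spin} (so that $h_d$ already lies in the closed orbit $C_d$ for $d$ large and one may take $h'=h$ in the construction of Theorem~\ref{thm:main2}) is exactly the implication the paper leaves to the reader.
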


\begin{ack}
The second author thanks Lex Schrijver for useful discussions on an
alternative proof for Theorem \ref{thm:main2}. The first author is
supported by a Vidi grant from the Netherlands Organisation for Scientific
Research (NWO).
\end{ack}

\end{document}